\newcommand{\abs}[1]{\left\vert#1\right\vert}
\newcommand{\norm}[1]{\left\Vert#1\right\Vert}
\newcommand{\D}{\mathcal{D}}
\newcommand{\Po}{\mathcal{P}}
\newcommand{\Q}{\mathcal{Q}}
\newcommand{\R}{\mathcal{R}}
\newcommand{\U}{\mathbb{U}}
\newcommand{\V}{\mathbb{V}}
\newcommand{\W}{\mathbb{W}}
\newcommand{\X}{\mathbb{X}}
\newcommand{\grad}{\nabla}
\DeclareMathOperator{\rank}{rank}
\DeclareMathOperator{\Nl}{Null}
\DeclareMathOperator{\im}{image}
\DeclareMathOperator{\Hom}{Hom}
\newcommand{\f}{\varphi}
\newcommand{\LA}{\Lambda}
\newcommand{\al}{\alpha}
\newcommand{\g}{\gamma}
\newcommand{\bt}{\beta}
\newcommand{\z}{\zeta}
\newcommand{\la}{\lambda}
\newtheorem{theorem}{Theorem}[section]
\newaliascnt{lemma}{theorem}
\newaliascnt{ques}{theorem}
\newtheorem{ques}[ques]{Question}
\newaliascnt{prop}{theorem}
\newtheorem{prop}[prop]{Proposition}
\theoremstyle{definition}
\newtheorem{example}[theorem]{Example}
\numberwithin{equation}{section}
\begin{document}

\title{A Generalized Poincar\'{e} Inequality for a Class of Constant Coefficient Differential Operators}

\author{Derek Gustafson} \address{Department of Mathematics, Syracuse University, Syracuse, NY 13210}
\email{degustaf@syr.edu}

\copyrightinfo{\currentyear} {Derek Gustafson}

\subjclass[2000]{Primary 35A99; Secondary 35B45, 58J10}
\keywords{Elliptic Complexes, Poincar\'{e} Inequality, Constant Rank}
\date{\today}

\begin{abstract} We study first order differential operators $\Po = \Po(D)$ with constant coefficients.  The main question is under what conditions a generalized Poincar\'{e} inequality holds $$\norm{ D(f-f_0)}_{L^p} \leq C \norm{ \Po f}_{L^p}, \hspace{.5in} \textrm{for some } f_0 \in \ker \Po.$$ We show that the constant rank condition is sufficient, \autoref{gen_inv}.  The concept of the Moore-Penrose generalized inverse of a matrix comes into play.  \end{abstract}

\maketitle

\section{Introduction}
The aim of this paper is to investigate a class of generalized Poincar\'e inequalities.  We begin by recalling the classical Poincar\'{e} Inequality
\begin{theorem} For each $ f \in \D'(\Bbb R^n)$ such that $\grad f \in L^p(\Bbb R^n)$and each ball $B\subset \Bbb R^n$, there exists a constant $f_B$ such that $$\int_B \abs{ f- f_B}^p \leq C \int_B \abs{\grad  f}^p.$$  We view $f_B$ as an element in $\D'(\Bbb R^n)$ with $\grad  f_B=0$.
\end{theorem}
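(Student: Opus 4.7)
The plan is to prove the classical Poincaré inequality on a ball via the standard integral representation of $f - f_B$ as a Riesz-type potential of $\grad f$, followed by an $L^p$ estimate for the convolution with the kernel $|x|^{-(n-1)}$.

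First I would reduce to the smooth case: assuming $\grad f \in L^p(\Bbb R^n)$ in the sense of distributions, mollification yields $f_\epsilon \in C^\infty$ with $\grad f_\epsilon \to \grad f$ in $L^p$, so it suffices to establish the inequality for smooth $f$ with a constant $C$ depending only on $n$, $p$, and the radius $r$ of $B$. For smooth $f$, define $f_B = \frac{1}{|B|} \int_B f$ and use the fundamental theorem of calculus along line segments to write
$$f(x) - f(y) = \int_0^1 \grad f\bigl(y + t(x-y)\bigr) \cdot (x-y) \, dt$$
for $x, y \in B$. Averaging over $y \in B$ and performing the change of variables $z = y + t(x-y)$ (separately for each fixed $t$, then integrating in $t$) yields the pointwise bound
$$\abs{f(x) - f_B} \leq C_n \int_B \frac{\abs{\grad f(y)}}{\abs{x-y}^{n-1}} \, dy.$$

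Next I would take the $L^p(B)$ norm of both sides. Writing the right-hand side as the convolution of the truncated kernel $K(z) = \abs{z}^{-(n-1)} \chi_{B(0,2r)}(z)$ with $\abs{\grad f} \chi_B$, observe that
$$\norm{K}_{L^1(\Bbb R^n)} = \int_{B(0,2r)} \abs{z}^{-(n-1)} \, dz = C_n' \, r < \infty.$$
Young's convolution inequality then gives
$$\norm{f - f_B}_{L^p(B)} \leq C_n \norm{K}_{L^1} \norm{\grad f}_{L^p(B)} = C(n,r) \norm{\grad f}_{L^p(B)},$$
which, after raising to the $p$-th power, is the desired inequality. The constant $f_B$ is manifestly annihilated by $\grad$ as an element of $\D'(\Bbb R^n)$.

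The main obstacle I anticipate is purely at the technical level of justifying the representation formula in the distributional setting and handling the endpoints $p = 1$ and $p = \infty$; the Young-inequality step is then routine because we are on a bounded ball and the kernel is locally integrable. For a more general operator $\Po$ replacing $\grad$, this same strategy will break down precisely at the representation step, since one cannot integrate along straight lines using only $\Po f$ — this is presumably what forces the constant rank hypothesis and the Moore-Penrose inverse machinery alluded to in the abstract.
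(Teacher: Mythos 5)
The paper does not prove this theorem; it is stated in the introduction purely as classical background to motivate the main question, so there is no proof of the paper's to compare against. Your argument is the standard potential-estimate proof and it is essentially correct. Two small remarks. First, the mollification reduction implicitly uses that a distribution $f$ with $\grad f \in L^p_{\mathrm{loc}}$ is a genuine locally integrable function, so that $f_\epsilon \to f$ in $L^1(B)$ and hence $(f_\epsilon)_B \to f_B$; this is true, but it is worth saying explicitly since the hypothesis is phrased at the level of $\D'(\Bbb R^n)$. Second, you flag the endpoints $p=1$ and $p=\infty$ as an anticipated obstacle, but your own argument already handles them: Young's inequality with an $L^1$ kernel is valid for all $1 \le p \le \infty$, and indeed the classical Poincar\'e inequality on a ball holds in that full range. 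The restriction to $1<p<\infty$ that appears later in the paper is needed only because the generalized theorems rely on Calder\'on--Zygmund singular integral bounds, not because of anything intrinsic to the ball inequality. Your closing observation is apt and is exactly the right way to read the paper: the line-integral representation formula is what is unavailable for a general first-order $\Po$, and the paper replaces it by a Fourier-side construction (the operators $R_j$ built from $\triangle^{-1}(\xi)$ or $\Po^\dagger(\xi)$), which is why the constant-rank hypothesis and the Moore--Penrose inverse enter.
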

This leads to our main question:
\begin{ques}For what partial differential operators $\Po$ of order $k$ is it true that for every $f \in \D'(\Bbb R^n, \U)$ such that $\Po f \in L^p(\Bbb R^n, \V)$, there exists $f_0\in \D'(\Bbb R^n, \U)$ such that $\Po f_0=0$ and \begin{equation}\label{est1}\norm{\sum_{\abs{\al}=k} D^\al\left(f- f_0\right)}_{p} \leq C \norm{\Po f}_p?\end{equation}
\end{ques}
Notice that with the change from $\grad$ to $\Po$ we also had to change some other details.  First, there is no need for the ball that appears in the classical theorem, our methods have been able to achieve global estimates.  But, our estimates are on the $k^{th}$ order partial derivatives of $f$, not $f$ itself.  The local $L^p$ estimates of $f-f_0$ will follow from \autoref{est1} by the usual Poincar\'{e} inequality.  We will confine our investigations to the case $k=1$, and $\Po$ has constant coefficients. 

In \autoref{Ell} we review elliptic complexes and provide a previously known result, see \cite{Giannetti_Verde00} for example, that derives a generalized Poincar\'e inequality using elliptic complexes.  In \autoref{GI} we review the notion of a generalized inverse of a matrix, and use this to prove a new generalized Poincar\'e inequality.  In \autoref{SGI} we prove a structure theorem for elliptic complexes that allows us to see the relationship between these two generalized Poincar\'e inequalities.

\section{Elliptic Complexes}\label{Ell}
Let $\U$, $\V$, and $\W$ be finite dimensional inner product spaces, whose inner products are denoted by $\left<\; , \;\right>_\U$, $\left< \; , \; \right>_\V$, and $\left< \; , \; \right>_\W$ respectively, or just $\left< \; , \; \right>$ when the space is clear. Let $\Po$ and $\Q$ be the first order differential operators with constant coefficients $$\Po= \sum_{i=1}^n A_i \frac{\partial}{\partial x_i}, \hspace{1.2in} \Q= \sum_{i=1}^n B_i \frac{\partial}{\partial x_i},$$ where the $A_i$ are linear operators from $\U$ to $\V$ and the $B_i$ are linear operators from $\V$ to $\W$.  We will use $$\Po(\xi) = \sum_{i=1}^n \xi_i A_i, \hspace{.5in} \textrm{and} \hspace{.5in} \Q(\xi)=\sum_{i=1}^n \xi_i B_i$$ to denote the symbols of $\Po$ and $\Q$, respectively.  We denote by $\D'(\Bbb R^n, \V)$ the space of distributions valued in $\V$.  We define a \textit{short elliptic complex of order 1 over $\Bbb R^n$} to be $$\begin{CD} \D'(\Bbb R^n, \U) @>\Po>> \D'(\Bbb R^n, \V) @>\Q>> \D'(\Bbb R^n, \W) \end{CD}$$ such that the \textit{symbol complex} $$\begin{CD} \U @>\Po(\xi)>> \V @>\Q(\xi)>> \W \end{CD}$$ is exact for all $\xi \neq 0 \in \Bbb R^n$.

From an elliptic complex, we form the adjoint complex $$\begin{CD} \D'(\Bbb R^n, \W) @>\Q^*>> \D'(\Bbb R^n, \V) @>\Po^*>> \D'(\Bbb R^n, \U)\end{CD}.$$ Here $\Po^*$ is the formal adjoint defined by $$\int_{\Bbb R^n} \left<\Po^*f,g\right>_{\U} = \int_{\Bbb R^n} \left<f, \Po g\right>_{\V}$$ for $f \in C^\infty_0(\Bbb R^n, \V)$ and $g \in C^\infty_0(\Bbb R^n, \U)$.  So, we have \begin{equation}\label{adj_def}\Po^* = - \sum_{i=1}^n A^*_i \frac{\partial}{\partial x_i},\end{equation} and similarly for $\Q^*$.  Here, we have identified $\U^*$, $\V^*$, and $\W^*$ with $\U$, $\V$ and $\W$, respectively, by use of their inner products.  Note that the adjoint complex is elliptic if and only if the original complex is.

From this, we define an associated second order \textit{Laplace-Beltrami Operator} by $$\triangle = \triangle_{\V} = -\Po \Po^* - \Q^*\Q: \D'(\Bbb R^n, \V) \to \D'(\Bbb R^n, \V),$$ with symbol denoted by $\triangle(\xi):\V \to \V$.  Linear Algebra shows that for every $v\in \V$, $\left< -\triangle(\xi) v, v \right> = \abs{\Po^*(\xi)v}^2 + \abs{\Q(\xi)v}^2 \geq 0$.  That equality only occurs when $\xi=0$ follows from the definition of an elliptic complex.  Thus, the linear operator $\triangle(\xi): \V \to \V$ is invertible for $\xi\neq0$.  We also have that as a function in $\xi$, $\triangle(\xi)$ is homogeneous of degree $2$.  So, letting $$c= \max_{\abs{\xi}=1} \norm{\triangle^{-1}(\xi): \V \to \V}, $$ we get the estimate $$\norm{\triangle^{-1}(\xi)} \leq c \abs{\xi}^{-2}.$$  So, solving the Poisson Equation $$\triangle \f =F$$ with $F \in C_0^\infty(\Bbb R^n, \V)$, we find the second derivatives of $\f$ by noting that $$\widehat{\frac {\partial^2 \f} {\partial x_i \partial x_j}} (\xi) = \xi_i\xi_j \triangle^{-1}(\xi) \widehat{F}(\xi).$$  Since $\xi_i\xi_j \triangle^{-1}(\xi): \V \to \V$ is bounded, this gives rise to a Calder\'{o}n-Zygmund type singular integral operator, $R_{ij}F = \frac {\partial^2} {\partial x_i \partial x_j}\f$ which is bounded on $L^p$ for $1<p<\infty$.  We will refer to these as the second order Riesz type transforms, due to the similarities with the classical Riesz transforms.  A detailed discussion of Calder\'{o}n-Zygmund singular integral operators and, in particular, the classical Riesz transforms can be found in \cite{Stein70}.

%We will use $$W^{k,p}(\Bbb R^n)= \left\{f: \sum_{\abs{\al}\leq k} \norm{\frac {\partial^{\abs{\al}}}{\partial x^\al}f}_p <\infty \right\}$$ to denote the classical Sobolev spaces, and $$L^{k,p}(\Bbb R^n) = \left\{f: \norm{f}_{k,p} = \sum_{\abs{\al}=k} \norm{\frac {\partial^{\abs{\al}}}{\partial x^\al}f}_p <\infty \right\} \subset W^{k,p}_{\textrm{loc}}(\Bbb R^n)$$ to denote the space of distributions with all $k^{th}$ order derivatives in $L^p(\Bbb R^n)$.  Note that we can approximate a $L^{k,p}(\Bbb R^n)$ function by $W^{k,p}(\Bbb R^n)$ functions.  This is accomplished by taking $f\in L^{k,p}(\Bbb R^n)$ and multiplying by $\s_n \in C^\infty_0(\Bbb R^n)$ where $0 \leq \s_n \leq 1$, $\s_n=1$ on the ball about $0$ of radius $n$, and has its support contained in the ball about $0$ of radius $2n$.  As $n$ goes to infinity, $f \s_n$ converges to $f$ in the $L^{k,p}$ norm.  So, in this notation, the Poisson Equation $$\triangle \f =F$$ with $F \in L^p(\Bbb R^n)$ is solvable for $\f \in L^{2,p}(\Bbb R^n)$.  Also, since $C^\infty_0(\Bbb R^n, \V)$ is dense in $L^{1,p}(\Bbb R^n, \V)$ and $\Po^*$ and $\Q$ are continuous under the $L^{1,p}$ seminorm, we can extend them by continuity to all of $L^{1,p}(\Bbb R^n, \V)$.  Similarly, $\Po$ and $\Q^*$ can be defined on $L^{1,p}(\Bbb R^n, \U)$ and $L^{1,p}(\Bbb R^n, \W)$, respectively.

We refer the reader to \cite{Donofrio_Iwaniec03}, \cite{Giannetti_Verde00}, \cite{Tarkhanov95}, and \cite{Uhlenbeck77} for further reading on elliptic complexes.   

We now present a previously know generalized Poincar\'e inequality, see \cite{Giannetti_Verde00} for example.

\begin{theorem}\label{Elliptic_complex}
Let $1<p<\infty$, and let $$\begin{CD} \D'(\Bbb R^n, \X) @>\R>> \D'(\Bbb R^n, \U) @>\Po>> \D'(\Bbb R^n, \V) @>\Q>> \D'(\Bbb R^n, \W)\end{CD}$$ be an elliptic complex of order 1, and let $f \in \D'(\Bbb R^n, \U)$ such that $\Po f\in L^p(\Bbb R^n, \V)$.  Then there exists $f_0 \in \D'(\Bbb R^n, \U) \cap \ker \Po$ with $$\norm{\sum_j \frac {\partial}{\partial x_j}\left(f-f_0\right)}_{p} \leq C \norm{\Po f}_p.$$
\end{theorem}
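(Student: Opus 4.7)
The plan is to produce an $h \in \D'(\Bbb R^n,\U)$ with $\Po h = \Po f$ (distributionally) and $\norm{Dh}_p \le C\norm{\Po f}_p$; then $f_0 := f - h$ lies in $\ker\Po$ and satisfies the required bound. Only the tail $\U\xrightarrow{\Po}\V\xrightarrow{\Q}\W$ of the complex is actually used for this construction.

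Let $g := \Po f \in L^p(\Bbb R^n,\V)$. Since $\Q\Po = 0$ from exactness, $\Q g = 0$ in $\D'$. Using the Laplace--Beltrami operator $\triangle = \triangle_\V = -\Po\Po^* - \Q^*\Q$ introduced above, I would define $h$ by the formal identity $h = -\Po^*\triangle^{-1} g$, or, more carefully, on the Fourier side by
$$\widehat{h}(\xi) := -\,\Po^*(\xi)\,\triangle(\xi)^{-1}\hat g(\xi).$$
Differentiating gives the symbols $-i\xi_j\Po^*(\xi)\triangle(\xi)^{-1}$, which are smooth away from the origin and homogeneous of degree $0$. By the same Calder\'on--Zygmund reasoning used in this section to bound the second order Riesz type transforms $R_{ij}$, each component of $Dh$ is given by an $L^p$ bounded Fourier multiplier for $1<p<\infty$; concretely, $\partial_j h = \sum_i A_i^*R_{ij}g$, whence $\norm{Dh}_p \le C\norm{g}_p$.

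The central step is to verify $\Po h = g$, which on symbols reads
$$-\,\Po(\xi)\Po^*(\xi)\triangle(\xi)^{-1}\hat g(\xi) = \hat g(\xi).$$
The key observation is that at every $\xi\neq 0$, exactness of the symbol complex gives the orthogonal decomposition $\V = \ker\Q(\xi)\oplus\im\Q^*(\xi)$ with $\ker\Q(\xi) = \im\Po(\xi) = (\ker\Po^*(\xi))^\perp$. Both self-adjoint symbols $\Po(\xi)\Po^*(\xi)$ and $\Q^*(\xi)\Q(\xi)$ vanish on one summand of this splitting and carry the other into itself, so $\triangle(\xi)$, and hence $\triangle(\xi)^{-1}$, preserves it. Because $\Q g = 0$ forces $\hat g(\xi)\in\ker\Q(\xi)$, the vector $\triangle(\xi)^{-1}\hat g(\xi)$ lies in $\ker\Q(\xi)$ as well; on that subspace $\Q^*(\xi)\Q(\xi) = 0$, so $\triangle(\xi) = -\Po(\xi)\Po^*(\xi)$ there, and the desired identity follows.

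The principal obstacle is precisely this symbol-level calculation showing that $\triangle(\xi)^{-1}$ respects the orthogonal splitting imposed by ellipticity; once that is in hand, $\Po h = g$ is automatic and the $L^p$ estimate is a direct application of the Calder\'on--Zygmund theory already cited in this section. A minor technical caveat is that the multiplier defining $h$ is homogeneous of degree $-1$, so $h$ may only be pinned down modulo an element of $\ker\Po$ through its first derivatives; this is harmless, since absorbing such an element into $f_0$ leaves both $\Po f_0 = 0$ and $D(f-f_0) = Dh$ intact.
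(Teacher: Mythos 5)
Your proof is correct and takes a related but genuinely different route from the paper's. The paper introduces a second Laplace--Beltrami operator $\triangle_\U = \R\R^* + \Po^*\Po$, solves $\triangle_\U\f = f$, sets $f_0 = f - \Po^*\Po\f$, and verifies $\Po f_0 = 0$ via the commutation identity $\triangle_\V\Po = \Po\triangle_\U$, which rests on both $\Q\Po = 0$ and $\Po\R = 0$. You instead define $h$ directly by the multiplier $-\Po^*(\xi)\triangle_\V^{-1}(\xi)$ applied to $g = \Po f$, and verify $\Po h = g$ through the pointwise orthogonal splitting $\V = \ker\Q(\xi)\oplus\im\Q(\xi)^*$ and the fact that $\widehat g(\xi)\in\ker\Q(\xi)$. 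The two constructions of $h$ agree formally, since $\Po^*\Po\f = \Po^*\triangle_\V^{-1}\Po f$, but your verification of $\Po h = g$ buys something: it uses only exactness of the symbol complex at $\V$ and never touches $\R$, so your proof actually establishes the theorem under the weaker hypothesis that $\U\xrightarrow{\Po}\V\xrightarrow{\Q}\W$ alone is a short elliptic complex. The paper's proof, by contrast, needs $\R$ and exactness at $\U$ so that $\triangle_\U$ is elliptic and $\triangle_\U\f = f$ is solvable. Your caveat that $h$ is only pinned down through its first derivatives is real (the multiplier is homogeneous of degree $-1$), but it is resolved exactly as in the paper's proof of \autoref{gen_inv} by checking $\partial_j\partial_k h = \partial_k\partial_j h$ on the Fourier side and invoking vanishing of the first de Rham cohomology of $\Bbb R^n$; the paper's proof of this theorem has the parallel, equally unaddressed gloss of asserting solvability of $\triangle_\U\f = f$ for a general distribution $f$.
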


\begin{proof}
Here we shall need not only the Laplace-Beltrami Operator for functions valued in $\V$, but also the Laplace-Beltrami Operator for functions valued in $\U$, $\triangle_\U = \R\R^* + \Po^*\Po$.  There exists $\f \in \D'(\Bbb R^n, \U)$ such that $\triangle_\U \f =f$.  Note that because of the exactness of the elliptic complex, we have the identity $$\triangle_\V \Po\f = \Po\Po^*\Po\f + \Q^*\Q\Po\f = \Po\Po^*\Po\f + \Po\R\R^*\f = \Po\triangle_\U\f = \Po f.$$  Let $f_0 = f - \Po^*\Po\f$.  Now it simply remains to verify that $f_0$ satisfies the conclusions of the theorem.  First, $$\Po f_0 = \Po f - \Po\Po^*\Po\f = \Po f - \Po\Po^*\Po\f - \Po\Q\Q^*\f = \Po f - \Po \triangle_\U \f =0.$$  Also, \begin{eqnarray*} \norm{\sum_j \frac {\partial}{\partial x_j} (f-f_0)}_p & \leq & \sum_j \norm{\frac {\partial}{\partial x_j} \Po^*\Po\f}_p \leq \sum_{i,j} \norm{A_i^* \frac {\partial^2}{\partial x_i \partial x_j} \Po\f}_p \\ & \leq & \sum_{i,j} \norm{ A_i^* R_{ij} \Po f}_p \leq \sum_{i,j} \norm{A_i^*} C_{i,j} \norm{\Po f}_p \\ & \leq & C\norm{\Po f}_p.\end{eqnarray*}
\end{proof}

\section{Generalized Inverses}\label{GI}
Before we are able to present the second theorem, we need to look at the theory of generalized inverses.
\begin{prop}For $A\in \Hom(\Bbb U, \Bbb V)$, there exists a unique $A^\dagger \in \Hom(\Bbb V, \Bbb U)$, called the \textit{Moore-Penrose generalized inverse},with the following properties:
\begin{enumerate}
\item $AA^\dagger A=A: \U \to \V$,
\item $A^\dagger AA^\dagger = A^\dagger: \V \to \U$,
\item $(AA^\dagger)^*=AA^\dagger: \V \to \V$,
\item $(A^\dagger A)^* = A^\dagger A: \U \to \U$.
\end{enumerate}\end{prop}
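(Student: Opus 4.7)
The plan is to establish existence by explicit construction using orthogonal decompositions, and then establish uniqueness via a standard algebraic manipulation of the four Penrose conditions.

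For existence, I would use the decompositions $\U = \ker A \oplus (\ker A)^\perp$ and $\V = \im A \oplus (\im A)^\perp$ afforded by the inner products. The restriction $A_0 = A|_{(\ker A)^\perp} : (\ker A)^\perp \to \im A$ is a linear bijection (injectivity is immediate; surjectivity follows because any $Au$ equals $A$ applied to the component of $u$ in $(\ker A)^\perp$). Define $A^\dagger : \V \to \U$ by setting $A^\dagger v = A_0^{-1} v$ for $v \in \im A$ and $A^\dagger v = 0$ for $v \in (\im A)^\perp$, extended by linearity. A direct check then shows that $A^\dagger A$ is the orthogonal projection of $\U$ onto $(\ker A)^\perp$ and $A A^\dagger$ is the orthogonal projection of $\V$ onto $\im A$. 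Properties (3) and (4) are then immediate because orthogonal projections are self-adjoint, and properties (1) and (2) follow from the projection identities $A A^\dagger|_{\im A} = \mathrm{id}$ and $A^\dagger A|_{(\ker A)^\perp} = \mathrm{id}$ together with the fact that $A$ vanishes on $\ker A$ and $A^\dagger$ vanishes on $(\im A)^\perp$.

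For uniqueness, suppose $B$ and $C$ both satisfy conditions (1)--(4). The idea is to chain the identities so as to exchange $B$ for $C$. Using (2), then (3), then (1), then (4), then (2) again, one computes
\begin{align*}
B &= B A B = B (AB)^* = B B^* A^* = B B^* (A C A)^* = B B^* A^* (CA)^* \\
  &= B B^* A^* C A = B (AB)^* C A = B A B C A = B A C A B \cdot {}\ldots
\end{align*}
A cleaner route, which I would actually write out in the paper, is: from (1) and (3), $A^* = (ACA)^* = A^*(CA)^* = A^*CA$, hence $B = B(AB)^* = BB^*A^* = BB^*A^*CA = B(AB)^*CA = BABCA = BACA \cdot B$-style manipulations collapse to $B = BAC$. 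By the symmetric computation with $B$ and $C$ interchanged, $C = BAC$, so $B = C$.

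The main obstacle is arranging the uniqueness chain so that each step cites exactly one of the four Penrose conditions; the existence half is essentially bookkeeping once the orthogonal decompositions are in place. Everything else reduces to the observation that, once $A^\dagger$ is defined piecewise using $A_0^{-1}$, the compositions $A^\dagger A$ and $A A^\dagger$ genuinely are orthogonal projections, which is what fuels conditions (3) and (4).
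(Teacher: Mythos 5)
The paper itself does not prove this proposition; it refers the reader to the cited reference on generalized inverses, so there is no in-text argument to compare against, and what matters is whether your proof stands on its own. The existence half does: restricting $A$ to $(\ker A)^\perp$ gives a bijection onto $\im A$, and defining $A^\dagger$ to invert that restriction and to vanish on $(\im A)^\perp$ makes $A^\dagger A$ and $A A^\dagger$ the orthogonal projections onto $(\ker A)^\perp$ and $\im A$ respectively, from which (1)--(4) are immediate. This construction also delivers, for free, the paper's subsequent proposition identifying $A A^\dagger$ and $A^\dagger A$ as those two projections.

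The uniqueness half contains a genuine error. You assert $A^* = (ACA)^* = A^*(CA)^*$, but taking adjoints reverses composition: $(ACA)^* = A^* C^* A^*$, which groups as $(CA)^* A^*$ or as $A^*(AC)^*$, never as $A^*(CA)^*$. Applying (3) for $C$ to the correct grouping yields $A^* = A^*(AC)^* = A^*AC$, not $A^*CA$. Feeding your incorrect identity into the chain produces $B = BCA$ rather than $B = BAC$, and the ``$BACA\cdot B$-style manipulations'' you gesture at do not repair this. A correct chain is
$$B = BAB = B(AB)^* = BB^*A^* = BB^*(A^*AC) = B(AB)^*AC = B(AB)AC = BABAC = BAC,$$
using (2) and (3) for $B$ and the identity $A^* = A^*AC$ coming from (1) and (3) for $C$. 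The companion computation for $C$ is not a literal $B\leftrightarrow C$ interchange (that would give $C = CAB$); instead run the dual chain using (2) and (4) for $C$ together with $A^* = BAA^*$, which follows from (1) and (4) for $B$:
$$C = CAC = (CA)^*C = A^*C^*C = (BAA^*)C^*C = BA(CA)^*C = BA(CA)C = BACAC = BAC.$$
Hence $B = BAC = C$. Your overall strategy is the right one, but as written the adjoint bookkeeping is wrong and the displayed chain does not establish $B = C$.
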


The linear map $A^\dagger$ has properties similar to inverse matrices that make it valuable as a tool.
\begin{prop}For $\la\neq 0$, $(\la A)^\dagger = \la^{-1} A^\dagger$.\end{prop}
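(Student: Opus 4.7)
The plan is to exploit the uniqueness clause in the characterization of the Moore--Penrose generalized inverse stated in the preceding proposition. By that uniqueness, in order to show $(\lambda A)^\dagger = \lambda^{-1} A^\dagger$ it suffices to verify that the candidate $B := \lambda^{-1} A^\dagger$ satisfies the four Penrose conditions relative to $\lambda A$, namely $(\lambda A)B(\lambda A) = \lambda A$, $B(\lambda A)B = B$, $((\lambda A)B)^* = (\lambda A)B$, and $(B(\lambda A))^* = B(\lambda A)$.

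Each verification is a direct computation. For the first, $(\lambda A)(\lambda^{-1} A^\dagger)(\lambda A) = \lambda \,A A^\dagger A = \lambda A$ by property (1) for $A$. For the second, $(\lambda^{-1} A^\dagger)(\lambda A)(\lambda^{-1} A^\dagger) = \lambda^{-1} A^\dagger A A^\dagger = \lambda^{-1} A^\dagger$ by property (2). The key observation making the remaining two checks clean is that the factors of $\lambda$ and $\lambda^{-1}$ collapse inside the products before we take adjoints, so that $(\lambda A)(\lambda^{-1} A^\dagger) = A A^\dagger$ and $(\lambda^{-1} A^\dagger)(\lambda A) = A^\dagger A$; properties (3) and (4) for $A$ then give self-adjointness immediately, with no conjugate on $\lambda$ to worry about.

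There is essentially no obstacle here; the only subtlety worth flagging is that one might initially expect the complex-conjugate $\bar\lambda$ to intervene in the adjoint conditions, which would be problematic over $\mathbb{C}$. The cancellation of $\lambda$ with $\lambda^{-1}$ inside the product neutralizes that concern, so the identity holds uniformly over real or complex inner product spaces. Once the four conditions are confirmed, the uniqueness asserted in the previous proposition forces $(\lambda A)^\dagger = \lambda^{-1} A^\dagger$, completing the proof.
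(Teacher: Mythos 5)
Your proof is correct. Note that the paper does not actually supply a proof of this proposition (or of the neighboring ones about generalized inverses); it simply refers the reader to Campbell and Meyer and the references therein. Your argument --- verify the four Penrose conditions for the candidate $\lambda^{-1}A^\dagger$ relative to $\lambda A$ and then invoke the uniqueness clause from the preceding proposition --- is the standard and natural one, and your observation that the scalar factors $\lambda$ and $\lambda^{-1}$ cancel inside the products $(\lambda A)(\lambda^{-1}A^\dagger) = AA^\dagger$ and $(\lambda^{-1}A^\dagger)(\lambda A) = A^\dagger A$ before the adjoint is applied correctly disposes of any worry about complex conjugation, so the identity indeed holds over both $\mathbb{R}$ and $\mathbb{C}$.
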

\begin{prop}For a continuous matrix valued function $P=P(\xi)$, the function $P^\dagger = P^\dagger(\xi)$ is continuous at $\xi$ if and only if there is a neighborhood of $\xi$ on which $P$ has constant rank.\end{prop}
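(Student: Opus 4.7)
The plan is to prove the two implications separately. Throughout the argument, by properties (1) and (3) of the Moore--Penrose inverse, $\pi(\xi) := P(\xi) P^\dagger(\xi)$ is the orthogonal projection onto $\im P(\xi)$, so in particular $\operatorname{tr}\pi(\xi) = \rank P(\xi)$.

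The ``only if'' direction is a short trace argument. Assume $P^\dagger$ is continuous at $\xi_0$; then since $P$ is continuous by hypothesis, so is the product $\pi = P P^\dagger$, and hence so is the scalar function $\operatorname{tr}\pi = \rank P$. A real-valued function that is continuous at $\xi_0$ and takes only integer values must agree with its limit on a whole neighborhood of $\xi_0$ (values within $1/2$ of the limit are forced to equal the limit), so $\rank P$ is locally constant at $\xi_0$.

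For the ``if'' direction, assume $P$ has constant rank $r$ on a neighborhood $U$ of $\xi_0$. A direct check of the four defining properties shows $P^\dagger = P^*(PP^*)^\dagger$, so it suffices to establish continuity of $(PP^*)^\dagger$. Set $S(\xi) := P(\xi)P(\xi)^*$; this is a continuous family of self-adjoint positive semidefinite matrices of constant rank $r$. The nonzero eigenvalues $\la_1(\xi),\ldots,\la_r(\xi)$ of $S(\xi)$ are continuous in $\xi$ and, by the constant rank assumption, uniformly bounded away from $0$ on a smaller compact neighborhood $V$ of $\xi_0$. Choose a positively oriented simple closed contour $\g \subset \Bbb C$ that lies in the right half plane, encloses all these nonzero eigenvalues for $\xi \in V$, but avoids $0$. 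A residue computation using the spectral decomposition of $S(\xi)$ then gives
$$S(\xi)^\dagger \;=\; \frac{1}{2\pi i}\oint_{\g} \frac{1}{z}\bigl(zI - S(\xi)\bigr)^{-1}\, dz,$$
the point being that the zero eigenvalue sits outside $\g$ and contributes nothing. The integrand is jointly continuous in $(z,\xi)$ on $\g \times V$, so the integral is continuous in $\xi$, whence $P^\dagger = P^* S^\dagger$ is too.

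The main obstacle is the sufficiency direction: the Moore--Penrose inverse is genuinely discontinuous at rank-dropping points, so no naive limit or perturbation formula can succeed in general. The contour integral representation is what makes the constant rank hypothesis bite, by cleanly separating the persistent nonzero spectrum of $S$ from its exact zero eigenvalue.
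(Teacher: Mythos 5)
The paper does not give its own proof of this proposition; it defers to the literature (Campbell and Meyer), so there is no ``paper's proof'' to compare against. Taking your argument on its own terms, it is correct. The necessity direction via $\operatorname{tr}(PP^\dagger) = \rank P$ and integer-valuedness is clean and complete. For sufficiency, the identity $P^\dagger = P^*(PP^*)^\dagger$ does hold in general (one verifies the four Moore--Penrose axioms directly, using that $S=PP^*$ is self-adjoint so $S^\dagger$ is too and $SS^\dagger$ is the orthogonal projection onto $\im P$), and this correctly reduces the problem to the self-adjoint positive semidefinite case. The contour-integral representation of $S^\dagger$ is exactly where the constant rank hypothesis is used: on a compact neighborhood $V$ the $r$ nonzero eigenvalues of $S(\xi)$ are bounded below by some $\delta>0$ (by continuity of the $r$-th largest eigenvalue and compactness) and above by $\sup_V\|S\|$, so a fixed contour $\g$ around $[\delta, M]$ avoiding $0$ works uniformly, the resolvent $(zI - S(\xi))^{-1}$ is jointly continuous on $\g \times V$, and the residue computation correctly discards the zero eigenvalue. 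This holomorphic-functional-calculus route is a genuinely self-contained argument where the paper simply cites a reference, and it has the added virtue of making the role of the constant-rank assumption completely transparent.
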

\begin{prop}$AA^\dagger$ is the orthogonal projection onto the image of $A$.  $A^\dagger A$ is the orthogonal projection onto the orthogonal complement of the kernel of $A$.\end{prop}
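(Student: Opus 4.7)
The plan is to exploit the four defining identities directly: properties (3) and (4) immediately give self-adjointness of $AA^\dagger$ and $A^\dagger A$, while properties (1) and (2) let one verify that each is idempotent, and a self-adjoint idempotent is by definition an orthogonal projection. Once both are identified as orthogonal projections, it only remains to pin down the subspace each one projects onto.

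For $AA^\dagger$, I would first check $(AA^\dagger)(AA^\dagger) = A(A^\dagger A A^\dagger) = AA^\dagger$ using property (2), which together with property (3) shows that $AA^\dagger$ is an orthogonal projection. To identify its image I would note the obvious inclusion $\im(AA^\dagger) \subseteq \im(A)$, and for the reverse direction observe that any $v = Au$ satisfies $AA^\dagger v = (AA^\dagger A)u = Au = v$ by property (1), so $v$ lies in the image of $AA^\dagger$. Hence $\im(AA^\dagger) = \im(A)$.

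For $A^\dagger A$ the argument is entirely parallel: $(A^\dagger A)(A^\dagger A) = A^\dagger(A A^\dagger A) = A^\dagger A$ by property (1), and self-adjointness is property (4), so $A^\dagger A$ is an orthogonal projection. To identify the image I would work instead with its kernel, since an orthogonal projection is determined by the orthogonal complement of its kernel. Clearly $\ker A \subseteq \ker(A^\dagger A)$, and conversely $A^\dagger A u = 0$ implies $Au = (A A^\dagger A)u = 0$ by property (1), so $\ker(A^\dagger A) = \ker A$, and therefore $\im(A^\dagger A) = (\ker A)^\perp$.

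There is no real obstacle here; the entire proof is a formal manipulation of the four axioms and the elementary fact that a self-adjoint idempotent is an orthogonal projection onto its image. The only conceptual point worth flagging is the use of the self-adjointness of $A^\dagger A$ to convert the statement about $(\ker A)^\perp$ into a statement about the image via the standard identity $\im(P) = (\ker P)^\perp$ for orthogonal projections $P$.
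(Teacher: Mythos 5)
The paper does not supply a proof of this proposition; it states it as background and refers the reader to \cite{Campbell_Meyer79} for details. Your argument is correct and is the standard one: self-adjointness comes from axioms (3) and (4), idempotency from (1) and (2) respectively, and the image (or kernel) is then pinned down by two short inclusion arguments using (1). The only thing you might make slightly more explicit is the step $Au = A(A^\dagger A u) = 0$, where you are quietly inserting $A^\dagger A u = 0$ into $A = AA^\dagger A$ — but this is immediate and not a gap. Nothing is missing; the proof would be a fine substitute for the citation.
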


For a more detailed discussion of generalized inverses and the proofs of these results, consult \cite{Campbell_Meyer79} and the references cited there.
Generalized Inverses are the additional tools we need for the following theorem.
\begin{theorem}\label{gen_inv}Let $\Po:\D'(\Bbb R^n, \U) \to \D'(\Bbb R^n, \V)$ be a differential operator of order 1 with constant coefficients and symbol $\Po(\xi)$ which is of constant rank for $\xi\neq 0$, and let $f \in \D'(\Bbb R^n, \U)$ such that $\Po f\in L^p(\Bbb R^n, \V)$, $1<p<\infty$.  Then there exists $f_0 \in \D'(\Bbb R^n, \U)$ such that $\Po f_0=0$ and $$\norm{\sum_j \frac {\partial}{\partial x_j} \left(f-f_0\right)}_{p} \leq C \norm{\Po f}_p.$$
\end{theorem}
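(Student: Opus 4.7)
The plan is to mimic the proof of Theorem \ref{Elliptic_complex}, but to replace the role of the Laplace--Beltrami inversion (which used the elliptic complex at two adjacent levels) by the Moore--Penrose generalized inverse of the symbol $\Po(\xi)$. Concretely, I would work on the Fourier side. Since $\widehat{\Po f}(\xi) = \Po(\xi)\widehat{f}(\xi)$ lies in $\im\Po(\xi)$ for every $\xi\neq 0$, property (1) of the Moore--Penrose inverse gives a natural right inverse: define $h\in\D'(\Bbb R^n,\U)$ by the Fourier multiplier relation
\[
\widehat{h}(\xi) = \Po^\dagger(\xi)\,\widehat{\Po f}(\xi),
\]
and set $f_0 := f - h$. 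Then $\Po(\xi)\widehat{h}(\xi)=\Po(\xi)\Po^\dagger(\xi)\Po(\xi)\widehat{f}(\xi)=\Po(\xi)\widehat{f}(\xi)$ by property (1), so $\Po f_0 = 0$ as required.

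Next I would estimate the derivatives. For each $j$,
\[
\widehat{\tfrac{\partial h}{\partial x_j}}(\xi) = i\xi_j\,\Po^\dagger(\xi)\,\widehat{\Po f}(\xi),
\]
so $\partial_j h = M_j(\Po f)$ where $M_j$ is the Fourier multiplier operator with symbol $m_j(\xi):=i\xi_j\Po^\dagger(\xi)$. Because $\Po(\xi)$ is homogeneous of degree $1$, Proposition 3.2 yields that $\Po^\dagger(\xi)$ is homogeneous of degree $-1$, hence $m_j$ is homogeneous of degree $0$. The constant rank hypothesis, via Proposition 3.3, yields continuity of $\Po^\dagger$ on $\Bbb R^n\setminus\{0\}$; since $\Po(\xi)$ is linear in $\xi$ and the rank is locally constant there, this continuity upgrades to smoothness away from the origin (one may see this by writing $\Po^\dagger = \lim_{\varepsilon\to 0^+}(\Po^*\Po+\varepsilon I)^{-1}\Po^*$ and using constant rank to pass the limit through, or by using a smooth local SVD on rank-$r$ matrices). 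Consequently $m_j$ is a smooth, degree-$0$ homogeneous Fourier multiplier, i.e.\ a Mikhlin multiplier, and defines a Calder\'on--Zygmund type singular integral operator of precisely the kind discussed at the end of Section \ref{Ell}. Such operators are bounded on $L^p$ for $1<p<\infty$, so
\[
\norm{\tfrac{\partial h}{\partial x_j}}_p = \norm{M_j(\Po f)}_p \leq C_j\norm{\Po f}_p,
\]
and summing over $j$ gives the desired inequality for $f-f_0 = h$.

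The step I expect to be the main obstacle is the smoothness of the multiplier $\xi\mapsto \Po^\dagger(\xi)$ on $\Bbb R^n\setminus\{0\}$. Proposition 3.3 in the text only asserts continuity under the constant rank condition, whereas Mikhlin's theorem (or the Calder\'on--Zygmund machinery invoked in Section \ref{Ell}) requires control of the derivatives $\partial^\alpha m_j(\xi)$ on the sphere. Verifying this regularity cleanly---either by a local smooth factorization of $\Po(\xi)$ using the constant rank, or by checking that the Tikhonov-type regularization $(\Po^*\Po + \varepsilon I)^{-1}\Po^*$ converges smoothly---is the real content beyond the symbol-level identity that produces $f_0$. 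Once that is in hand, the remainder of the argument is a direct specialization of the boundedness of second-order Riesz-type transforms already recorded in Section \ref{Ell}.
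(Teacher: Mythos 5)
Your proposal uses the same central idea as the paper---applying the Moore--Penrose inverse $\Po^\dagger(\xi)$ on the Fourier side, recognizing $i\xi_j\Po^\dagger(\xi)$ as a degree-$0$ homogeneous multiplier, and invoking Calder\'on--Zygmund theory---so the multiplier analysis and the $L^p$ estimate are exactly right. You also correctly flag the one point that requires more than continuity of $\Po^\dagger$: the paper handles the smoothness of $\Po^\dagger$ on the unit sphere by citing Theorem 4.3 of Golub--Pereyra, which gives a differentiation formula for $\Po^\dagger$ in terms of $\Po$, $\Po^\dagger$, and $\Po'$ valid precisely under the constant rank hypothesis; your Tikhonov and local-SVD alternatives would also work.

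There is, however, a genuine gap in your construction of $f_0$. You set $\widehat{h}(\xi)=\Po^\dagger(\xi)\widehat{\Po f}(\xi)$, but $\Po^\dagger(\xi)$ is homogeneous of degree $-1$ and hence singular at the origin, and $\widehat{\Po f}$ is only a tempered distribution (not a function when $p>2$). Multiplying a general tempered distribution by a symbol that blows up at $\xi=0$ is not a well-defined operation, so $h$ itself is not constructed. What is well-defined is $M_j(\Po f)$, because the symbols $i\xi_j\Po^\dagger(\xi)$ are bounded. The paper's proof closes exactly this gap: it defines only the operators $R_j$ with symbols $i\xi_j\Po^\dagger(i\xi)$ (your $M_j$), observes the compatibility relation $\frac{\partial}{\partial x_j}R_k = \frac{\partial}{\partial x_k}R_j$, so that the $\U$-valued $1$-form $\bigl\langle \frac{\partial f}{\partial x_j} - R_j\Po f\bigr\rangle_j$ is closed, and then uses the triviality of $H^1(\Bbb R^n)$ to integrate it to a distribution $f_0$ with $\frac{\partial f_0}{\partial x_j} = \frac{\partial f}{\partial x_j}-R_j\Po f$. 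From $\sum_j A_jR_j=\mathrm{Id}$ on $\operatorname{image}\Po$ (which you have via $\Po\Po^\dagger\Po=\Po$) one gets $\Po f_0=0$, and the estimate follows from $L^p$-boundedness of the $R_j$. In short: you should not try to define $h$ directly; define its gradient via the bounded multipliers, verify closedness, and integrate.
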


Note that this is the same constant rank condition investigated in \cite{FonsecaMuller99} in relation to quasiconvexity of variational integrals.

%\begin{proof}
%The symbol $\Po(\xi): \U \to \V$ has a generalized inverse $\Po^\dagger(\xi): \V \to \U$.  We associate with $\Po^\dagger (\xi)$ a pseudodifferential operator $\Po^\dagger$ defined on functions $h \in C^\infty_0(\Bbb R^n, \V)$ by the rule $\Po^\dagger h (x) = (2\pi)^{-n/2}\int e^{i x\cdot \xi} \Po^\dagger(i \xi) \widehat{h}(\xi) d\xi$.  Note that since $\Po(\xi) \Po^\dagger(\xi) \Po(\xi)=\Po(\xi)$, we get that $\Po\Po^\dagger\Po=\Po$.  Thus, applying $\Po^\dagger$ to $\Po f$ gives us $f-f_0$.  Note that since $\Po(\xi)$ is homogeneous of degree 1, we get that for $\la\neq 0$ $$(\la\xi_i)\Po^\dagger(\la\xi) = \la\xi_i \left( \la \Po(\xi)\right)^\dagger = \xi_i\Po(\xi).$$  So, $\xi_i\Po^\dagger(\xi)$ is homogeneous of degree 0.  Since $\Po(\xi)$ has constant rank away from the origin, $\xi_i\Po^\dagger(\xi)$ is continuous on $\abs{\xi}=1$, which makes it bounded.  Therefore, the operator given by $$f\mapsto (2\pi)^{-n/2}\int e^{i x\cdot \xi} \xi_i\Po^\dagger(\xi) \widehat{f}(\xi) d\xi$$ is bounded on $L^p$ for $1<p<\infty$.  Thus $$\norm{f-f_0}_{1,p} \leq \sum_i \norm{\int \xi_i\Po^\dagger(\xi) \widehat{\cdot}(\xi) d\xi} \norm{\Po f}_p.$$
%\end{proof}

\begin{proof}
From the symbol $\Po(\xi): \U \to \V$, we have its generalized inverse $\Po^\dagger (\xi): \V \to \U$.  We use this to define pseudodifferential operators $R_j$, which we will refer to as the first order Riesz type transforms.  For $h \in C_0^\infty(\Bbb R^n, \V)$, we define $R_j h (x) = (2\pi)^{-n/2} \int i e^{ix\cdot \xi} \xi_j\Po^\dagger (i\xi) \widehat{h}(\xi) d\xi$. Note that since $\Po(\xi)$ is homogeneous of degree 1, we get that for $\la\neq 0$ $$(\la\xi_j)\Po^\dagger(i\la\xi) = \la\xi_j \left( \la \Po(i\xi)\right)^\dagger = \xi_j\Po(i\xi).$$  So, $\xi_j\Po^\dagger(i\xi)$ is homogeneous of degree 0.  Since $\Po(\xi)$ is a polynomial, $\Po^\dagger(i\xi)$ is infinitely differentiable on $\abs{\xi}=1$ by Theorem 4.3 of \cite{Golub_Pereyra73}, which gives a formula for the derivative of $\Po^\dagger(\xi)$ in terms of $\Po^\dagger(\xi)$, $\Po(\xi)$, and the derivative of $\Po(\xi)$.  Thus, $R_j$ extends continuously to a Calder\'on-Zygmund singular integral operator from $L^p(\Bbb R^n, \V)$ to $L^p(\Bbb R^n, \U)$.  Recalling the definition of the operator $\Po = \sum_j A_j \frac {\partial} {\partial x_j}$, we note that $$\sum_j A_j R_j h = (2\pi)^{-n/2} \int e^{ix\cdot \xi} \Po(i \xi) \Po^\dagger(i\xi) \widehat{h} (\xi) d\xi.$$  So, if $h=\Po g$, then \begin{eqnarray*}\sum_j A_j R_j h &=& (2\pi)^{-n/2} \int e^{ix\cdot \xi} \Po(i \xi) \Po^\dagger(i\xi) \Po(i \xi) \widehat{g} (\xi) d\xi \\ &=& (2\pi)^{-n/2} \int e^{ix\cdot \xi} \Po(i \xi) \widehat{g} (\xi) d\xi = \Po g = h.\end{eqnarray*} And, since this is defined by a Calder\'{o}n-Zygmund singular integral operator, the identity $$\sum_j A_j R_j = Id$$ extends to all of $L^p(\Bbb R^n, \V)$.

The reader may wish to notice that $$\left(\frac {\partial} {\partial x_j} R_k h\right)^\wedge = \left(i \xi_j\right) \left(i \xi_k\Po^\dagger (i\xi) \widehat{h}(\xi)\right) = \left(i \xi_k\right) \left(i \xi_j\Po^\dagger (i\xi) \widehat{h}(\xi)\right) = \left(\frac {\partial} {\partial x_k} R_j h\right)^\wedge,$$ which means that $$\frac {\partial} {\partial x_j} R_k h = \frac {\partial} {\partial x_k} R_j h.$$  Thus, $$\frac {\partial} {\partial x_j} \left(\frac {\partial}{\partial x_k} f - R_k \Po f\right) = \frac {\partial} {\partial x_k} \left(\frac {\partial}{\partial x_j} f - R_j \Po f\right).$$This can be viewed as saying that $$\left(d\otimes Id\right) \left<\frac {\partial}{\partial x_j} f - R_j \Po f\right>_j =0 : \D'(\Bbb R^n, \LA^1(\Bbb R^n)\otimes V) \to \D'(\Bbb R^n, \LA^2(\Bbb R^n)\otimes V),$$ where $d$ is the exterior derivative, and $\LA^l(\Bbb R^n)$ is the space of $l$-covectors over $\Bbb R^n$.  So, we wish to solve $$(d\otimes Id)f_0 = \left<\frac {\partial}{\partial x_j} f - R_j \Po f\right>_j.$$  This is possible since the first homology group of $\Bbb R^n$ is $0$.  Thus, there exists a distribution $f_0$ such that $\frac {\partial}{\partial x_j} f_0 = \frac {\partial}{\partial x_j} f - R_j \Po f$, for $j=1, \dots, n$.  Then, we have $$\Po f_0 = \Po f - \sum_j A_j R_j \Po f = \Po f - \Po f =0.$$  And, $$\norm{\sum_j \frac {\partial}{\partial x_j} \left(f-f_0\right)}_{p} \leq \sum_j \norm{R_j \Po f}_p \leq C \norm{\Po f}_p,$$ where the constant depends on the norms of the Riesz type transforms.
\end{proof}

\section{Sufficiency of Generalized Inverses}\label{SGI}
At this point we have proved \autoref{Elliptic_complex} and \autoref{gen_inv} in an attempt to answer our question about when a generalized Poincar\'{e} inequality is true.  What is unclear is if these two results are related in any way.  
%Before we can answer this question, we will need the following lemma.  It should be noted that although it is elementary in nature, we were unable to find it in the literature.

%\begin{lemma} \label{lsc}Let $\U$ and $\V$ be finite dimensional innerproduct spaces.  Let $r:\Hom(\U, \V) \to \Bbb N_0$  be the function that takes a matrix to its rank.  If $\Hom(\U, \V)$ is given the operator norm, then $r$ is lower semicontinuous.\end{lemma}

%\begin{proof}Since $r$ only takes values in the nonnegative integers, it is enough to show that for each natural number $t$, the set $\{r(A) \leq t-1\}$ is closed.  Let $u_1, \dots, u_l$ be an orthonormal basis for $\U$.  Let $\OM$ be the collection of ordered $t$-tuples $\om= (i_1, \dots, i_t)$ where $1 \leq i_1 < \dots < i_t \leq l$.  For each $\om \in \OM$, we define a function $\Phi_\om: \Hom(\U, \V) \times \Bbb{S}^{t-1} \to \V$ by $$\Phi_\om ( A, \mathbf{m}) = \sum_{\nu=1}^t m_\nu A u_{i_\nu}, \hspace{.5in} \left( \sum_{\nu=1}^t m_\nu^2 =1 \right)$$ where $\mathbf{m}=(m_1, \dots, m_t) \in \Bbb{S}^{t-1}$.  Consider $\Phi_\om^{-1} (0)$.  This is a closed set in $\Hom(\U, \V) \times \Bbb{S}^{t-1}$.  Denote the projection of this set onto $\Hom(\U, \V)$ by $Z_\om$.  Note that $Z_\om$ is closed since $\Bbb{S}^{t-1}$ is compact.  Now, let $$Z = \bigcap_{\om\in\OM} Z_\om.$$  Note that $Z$ is a closed set since $\OM$ is finite.  Also, $Z$ is precisely the collection of matrices of rank less than $t$.\end{proof}

Since this next result is true for a broader class of elliptic complexes than what we have previously defined, we will take a moment for definitions so that we may state our result in this broader sense.  For differential operators $$\Po = \sum_{\abs\al \leq m} A_{\al}(x) D^\al$$ and $$\Q = \sum_{\abs\al \leq m} B_{\al}(x) D^\al$$ of order $m$ with variable coefficients, then $$\begin{CD} \D'(\Bbb R^n, \U) @>\Po>> \D'(\Bbb R^n, \V) @>\Q>> \D'(\Bbb R^n, \W) \end{CD}$$ is an elliptic complex of order $m$ if $\Q\Po=0$ and the symbol complex $$\begin{CD} \U @>\Po_m(x,\xi)>> \V @>\Q_m(x,\xi)>> \W \end{CD}$$ is exact for every $x$ and every $\xi \neq 0$.  Here, $\Po_m$ denotes the principle symbol of $\Po$, that is $\sum_{\abs\al=m} A_\al(x) \xi^\al$, and similarly for $\Q_m$.

\begin{theorem}
A sequence $$\begin{CD} \D'(\Bbb R^n, \U) @>\Po>> \D'(\Bbb R^n, \V) @>\Q>> \D'(\Bbb R^n, \W)\end{CD}$$ with continuous coefficients is an elliptic complex if and only if all of the following hold:
\renewcommand{\theenumi}{\roman{enumi}}
\begin{enumerate}
\item $\Q\Po=0$.
\item The sequence $$\begin{CD} \U @>\Po_m(y,\z)>> \V @>\Q_m(y,\z)>> \W \end{CD}$$ is exact for some $y$ and some $\z\neq0$.
\item For each multi-index $\g$ of length $2m$, $$\sum_{\substack{\al+\bt=\g, \\ \abs\al = \abs\bt =m}} B_\bt(x) A_\al(x) =0$$ as operators from $\U$ to $\V$.
\item The matrix $\Po_m(x,\xi)$ has constant rank for all $x$ and all $\xi\neq0$.
\item The matrix $\Q_m(x,\xi)$ has constant rank for all $x$ and all $\xi\neq0$.
\end{enumerate}
\end{theorem}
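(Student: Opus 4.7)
The plan is to prove both directions of the biconditional, with the main technical lever being lower semicontinuity of matrix rank. I would begin with a preliminary reformulation: condition (iii) is exactly the assertion that the polynomial $\Q_m(x,\xi)\Po_m(x,\xi)$ is identically zero in $\xi$, as one sees by expanding the product and reading off the coefficient of $\xi^\gamma$. Equivalently, (iii) says $\im\Po_m(x,\xi)\subseteq\ker\Q_m(x,\xi)$ at every $(x,\xi)$.

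For the necessity direction, suppose the sequence is an elliptic complex. Then (i) and (ii) are immediate from the definition. For (iii), the top-order ($2m$-th order) part of the operator identity $\Q\Po=0$ receives no product-rule corrections (all such corrections drop the order), and this top-order part equals $\sum_{|\alpha|=|\beta|=m}B_\beta(x)A_\alpha(x)D^{\alpha+\beta}$; since the full operator is zero, each top-order coefficient must vanish, which is exactly (iii). For the constant rank conditions (iv) and (v), exactness at $(x,\xi)$ with $\xi\neq 0$ gives $\rank\Po_m(x,\xi)+\rank\Q_m(x,\xi)=\dim\V$. Since matrix rank is lower semicontinuous in its entries and the two lower semicontinuous functions sum to the constant $\dim\V$, each must also be upper semicontinuous and hence continuous. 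An integer-valued continuous function on a connected set is constant; the domain $\Bbb R^n\times(\Bbb R^n\setminus\{0\})$ is connected for $n\geq 2$, and for $n=1$ the homogeneity relation $\Po_m(x,-\xi)=(-1)^m\Po_m(x,\xi)$ forces the two components to carry the same rank.

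For the sufficiency direction, I would assume (i)--(v) and derive exactness at every $(x,\xi)$ with $\xi\neq 0$. By the preliminary observation, (iii) already gives $\im\Po_m(x,\xi)\subseteq\ker\Q_m(x,\xi)$ everywhere. Condition (ii) upgrades this inclusion to equality at the single point $(y,\zeta)$, so $\rank\Po_m(y,\zeta)+\rank\Q_m(y,\zeta)=\dim\V$. The constant rank hypotheses (iv) and (v) propagate this identity to all $(x,\xi)$ with $\xi\neq 0$, so $\dim\im\Po_m(x,\xi)=\dim\V-\rank\Q_m(x,\xi)=\dim\ker\Q_m(x,\xi)$ at every such point, forcing the inclusion to be an equality. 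Combined with (i), this is exactly the definition of an elliptic complex.

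The main obstacle will be the semicontinuity step in the necessity direction; the rest is essentially definition chasing and rank-nullity. A mild subtlety when $n=1$, where the rank domain splits into two components, is handled by the homogeneity of the principal symbol.
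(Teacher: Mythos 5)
Your proposal is correct and follows essentially the same path as the paper: derive (i)--(iii) from the definitions, use lower semicontinuity of rank plus the rank-nullity identity $\rank\Po_m+\rank\Q_m=\dim\V$ coming from exactness to force continuity and hence constancy of the ranks, and then in the converse direction use (iii) for the inclusion $\im\Po_m\subseteq\ker\Q_m$, (ii) for equality at a single point, and (iv)--(v) to propagate that equality of dimensions to every $(x,\xi)$ with $\xi\neq 0$. Two small remarks. First, you derive (iii) by reading off the top-order part of the operator identity $\Q\Po=0$, whereas the paper reads (iii) off from the identity $\Q_m(x,\xi)\Po_m(x,\xi)=0$ in $\xi$ that exactness gives directly; both are one-line observations and entirely equivalent in spirit. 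Second, you are more careful than the paper on one point: after concluding that the integer-valued rank functions are continuous, one still needs the domain to be connected to pass from locally constant to constant, and you flag the $n=1$ case where $\Bbb R^n\setminus\{0\}$ is disconnected and handle it via the homogeneity $\Po_m(x,-\xi)=(-1)^m\Po_m(x,\xi)$. The paper's proof tacitly treats ``continuous integer-valued'' as ``constant'' without comment, so your version actually tightens a small gap.
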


\begin{proof}
We will begin by showing that an elliptic complex has the stated properties.  Note that (i) and (ii) follow trivially from the definition.  Since $\Q_m(x,\xi) \Po_m(x,\xi)=0$ as functions of $\xi$, we get (iii) be equating coefficients of $\xi^\g$.  Since the $A_\al$ and $B_\bt$ are continuous, we get that $\rank \Po_m$ and $\rank \Q_m$ are lower semicontinuous.  By the Rank-Nullity Theorem, the fact that the symbol complex is exact, and the lower semicontinuity of $\rank \Po_m$, we get that $\rank \Q_m$ is upper semicontinuous.  Therefore $\rank \Q_m$ is continuous.  And, since it is valued in a discrete set, we get (v).  Then, (iv) follows by the Rank-Nullity Theorem.

Now, we will assume that properties (i) through (iv) hold and show that the complex is elliptic.  Property (iii) give us that the composition $\Q_m\Po_m$ is identically $0$, which means that $\im\Po_m \subseteq \ker \Q_m$.  Now, the Rank-Nullity Theorem, and properties (ii), (iv), and (v) give us that $\rank \Po_m = \Nl\Q_m(x,\xi)$, proving ellipticity.

\end{proof}

This result shows that \autoref{Elliptic_complex} follows from \autoref{gen_inv}.  The following example shows that \autoref{gen_inv} is a strictly stronger result.

\begin{example}
Consider the differential operator $\Po: \D'(\Bbb R^2, \Bbb R^2) \to \D'(\Bbb R^2, \Bbb R^3)$ given by $$\Po = \left[ \begin{array}{cc} \frac{\partial} {\partial x} & 0 \\ \frac{\partial} {\partial y} & \frac{\partial} {\partial x} \\ 0 & \frac{\partial} {\partial y} \\ \end{array}\right].$$  Clearly, the symbol of $\Po$ has constant rank away from $\xi=0$.  

We will show that there is no first order constant coefficient differential operator $\Q = B_1 \frac {\partial} {\partial x} + B_2 \frac {\partial} {\partial y}$ such that $$\begin{CD} \D'(\Bbb R^2, \Bbb R^2) @>\Po>> \D'(\Bbb R^2, \Bbb R^3) @>\Q>> \D'(\Bbb R^2, \W) \end{CD}$$ is an elliptic complex.  Since the cokernel of $A_i$ has dimension 1 for each $i$, the largest $\W$ need be is $\Bbb R^2$, corresponding to the possibility that the images of the $B_i$ only share $0$.  Now, solving the equation $\Q(\xi) \Po(\xi) =0$ with $\W=\Bbb R^2$, we see that $\Q$ must be the zero operator, but the image of $\Po$ is not all of $\Bbb R^3$.  Thus, \autoref{gen_inv} applies to $\Po$, but \autoref{Elliptic_complex} does not.
\end{example}

%This complex is $$\begin{CD} \D'(\Bbb R^2, 0) @>0>> \D'(\Bbb R^2, \Bbb R^2) @>\Q>> \D'(\Bbb R^2, \Bbb R^3), \end{CD},$$ where

%Both $\Bbb R^2$ and $\Bbb R^3$ are equipped with their standard inner products.  This is yet another example of the constant coefficient type with no constant term.  As such, we need only check that the symbol complex is exact.  Clearly, the symbol of the zero operator is also the zero operator, so we need only check that $$\Q(\xi) = \left[ \begin{array}{cc} \xi_1 & 0 \\ \xi_2 & \xi_1 \\ 0 & \xi_2 \\ \end{array}\right]$$ is one-to-one for $\xi \neq 0$.  If $\Q(\xi)$ were not one-to-one, then its columns would need to be linearly dependent, i.e. there would be $\la\neq0$ such that $$\left[ \begin{array}{c} \xi_1 \\ \xi_2 \\ 0 \\ \end{array}\right] = \la \left[ \begin{array}{c} 0 \\ \xi_1 \\ \xi_2 \\ \end{array}\right],$$ which means that $\xi=0$.

\bibliographystyle{amsplain}
\bibliography{poincare}

\end{document}